\definecolor{webgreen}{rgb}{0,.5,0}
\numberwithin{equation}{section}
\def\N{{\mathds{N}}}
\def\Z{{\mathds{Z}}}
\def\1{{\bf 1}}
\newtheorem{theorem}{Theorem}[section]
\newtheorem{lemma}[theorem]{Lemma}
\begin{document}

\title{{\bf Short proof and generalization of a Menon-type identity by Li, Hu and Kim}}
\author{L\'aszl\'o T\'oth \\ \\ Department of Mathematics, University of P\'ecs \\
Ifj\'us\'ag \'utja 6, 7624 P\'ecs, Hungary \\ E-mail: {\tt ltoth@gamma.ttk.pte.hu}}
\date{}
\maketitle

\centerline{Taiwanese Journal of Mathematics {\bf 23} (2019), 557--561}

\begin{abstract} We present a simple proof and a generalization of a Menon-type identity by Li, Hu and Kim, involving Dirichlet characters
and additive characters.
\end{abstract}

{\sl 2010 Mathematics Subject Classification}: 11A07, 11A25

{\sl Key Words and Phrases}: Menon's identity, Dirichlet character, additive character, arithmetic function,
Euler's totient function, congruence

\section{Motivation and main result}

Menon's classical identity states that for every $n\in \N$,
\begin{equation} \label{Menon_id}
\sum_{\substack{a=1\\ (a,n)=1}}^n (a-1,n) = \varphi(n)\tau(n),
\end{equation}
where $(a-1,n)$ stands for the greatest common divisor of $a-1$ and $n$, $\varphi(n)$ is Euler's totient function and $\tau(n)=\sum_{d\mid n} 1$
is the divisor function. Identity \eqref{Menon_id} was generalized by several authors in various directions. Zhao and Cao \cite{ZhaCao}
proved that
\begin{equation} \label{Menon_id_char}
\sum_{a=1}^n (a-1,n) \chi(a)= \varphi(n)\tau(n/d),
\end{equation}
where $\chi$ is a Dirichlet character (mod $n$) with conductor $d$ ($n\in \N$, $d\mid n$). If $\chi$ is the principal character (mod $n$), that is
$d=1$, then \eqref{Menon_id_char} reduces to Menon's identity \eqref{Menon_id}. Generalizations of \eqref{Menon_id_char} involving even functions
(mod $n$) were deduced by the author \cite{Tot2018}, using a different approach.

Li, Hu and Kim \cite{LiHuKimTaiw} proved the following generalization of identity \eqref{Menon_id_char}:

\begin{theorem}[{\rm \cite[Th.\ 1.1]{LiHuKimTaiw}}] \label{Th_LiHuKim} Let $n\in \N$ and let $\chi$ be a Dirichlet character \textup{(mod $n$)} with conductor $d$ \textup{($d\mid n$)}.  Let $b\mapsto\lambda_{\ell}(b):= \exp(2\pi i w_{\ell}b/n)$ be
additive characters of the group $\Z_n$, with $w_{\ell}\in \Z$ \textup{($1\le \ell \le k$)}. Then
\begin{equation} \label{Menon_general_1_k}
\sum_{a,b_1,\ldots,b_k=1}^n (a-1,b_1,\ldots,b_k,n)\chi(a)\lambda_1(b_1)\cdots \lambda_k(b_k)  = \varphi(n) \sigma_k((n/d,w_1,\ldots,w_k)),
\end{equation}
where $\sigma_k(n)=\sum_{d\mid n} d^k$.
\end{theorem}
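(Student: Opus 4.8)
The plan is to linearize the gcd using the classical Euler identity $m=\sum_{e\mid m}\varphi(e)$ applied to $m=(a-1,b_1,\ldots,b_k,n)$, which gives
$$(a-1,b_1,\ldots,b_k,n)=\sum_{\substack{e\mid n\\ e\mid a-1,\ e\mid b_1,\ \ldots,\ e\mid b_k}}\varphi(e).$$
Substituting this into the left-hand side of \eqref{Menon_general_1_k} and interchanging the order of summation, the multiple sum factors completely as
$$\sum_{e\mid n}\varphi(e)\Bigl(\sum_{\substack{a=1\\ a\equiv 1\,(e)}}^{n}\chi(a)\Bigr)\prod_{\ell=1}^{k}\Bigl(\sum_{\substack{b=1\\ e\mid b}}^{n}\lambda_\ell(b)\Bigr),$$
so everything reduces to evaluating the one-dimensional inner sums for each divisor $e$ of $n$.

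The additive sums are routine geometric series: writing $b=ej$ with $1\le j\le n/e$, one has $\sum_{e\mid b}\lambda_\ell(b)=\sum_{j=1}^{n/e}\exp\bigl(2\pi i\,w_\ell j/(n/e)\bigr)$, which equals $n/e$ when $(n/e)\mid w_\ell$ and vanishes otherwise. Multiplying over $\ell$, the product of the $k$ additive sums equals $(n/e)^k$ precisely when $(n/e)\mid(w_1,\ldots,w_k)$, and is $0$ otherwise.

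The step I expect to be the crux is the Dirichlet sum $A(e):=\sum_{a\equiv 1\,(e)}\chi(a)$. Since $\chi(a)=0$ unless $(a,n)=1$, and since $e\mid n$ forces $(a,e)=1$ whenever $a\equiv 1\pmod e$, this is a sum of $\chi$ over the subgroup $H_e=\{a\in(\Z/n\Z)^{\times}:a\equiv 1\,(\mathrm{mod}\ e)\}$ of $(\Z/n\Z)^{\times}$. By orthogonality, $A(e)$ equals $|H_e|=\varphi(n)/\varphi(e)$ if $\chi$ is trivial on $H_e$ and $0$ otherwise. The key observation is that $\chi|_{H_e}$ is trivial exactly when $\chi$ factors through $(\Z/n\Z)^{\times}/H_e\cong(\Z/e\Z)^{\times}$, i.e.\ exactly when $\chi$ is induced by a character modulo $e$, which happens if and only if the conductor satisfies $d\mid e$; this is where the conductor enters. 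Hence $A(e)=(\varphi(n)/\varphi(e))\,[d\mid e]$.

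Finally I would combine the three pieces. The factor $\varphi(e)$ cancels against $\varphi(n)/\varphi(e)$, leaving
$$\varphi(n)\sum_{\substack{e\mid n,\ d\mid e\\ (n/e)\mid(w_1,\ldots,w_k)}}(n/e)^k.$$
Substituting $f=n/e$, the conditions $d\mid e\mid n$ become $f\mid n/d$, so the sum runs over $f\mid(n/d,w_1,\ldots,w_k)$ and equals $\sigma_k\bigl((n/d,w_1,\ldots,w_k)\bigr)$, which is exactly \eqref{Menon_general_1_k}.
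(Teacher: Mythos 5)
Your proof is correct, and its skeleton --- linearizing the gcd via Euler's identity $m=\sum_{e\mid m}\varphi(e)$, factoring the multiple sum over divisors $e\mid n$, and evaluating the additive sums as geometric series equal to $(n/e)^k$ when $(n/e)\mid(w_1,\ldots,w_k)$ --- is exactly the paper's (the paper does this for a general arithmetic function $F$, with $\mu*F$ in place of $\varphi$, and then specializes). Where you genuinely diverge is in the evaluation of the character sum $A(e)=\sum_{a\equiv 1\,(e)}\chi(a)$. The paper proves a more general statement (Lemma \ref{Lemma_charact}, for $a\equiv s\pmod e$ with arbitrary $s$, producing a factor $\chi^*(s)$) by first counting solutions of simultaneous congruences via a M\"obius-inversion argument (Lemma \ref{Lemma_cong}) and then invoking the vanishing of primitive character sums in arithmetic progressions (Lemma \ref{Lemma_char_sum}). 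You instead observe that for $s=1$ the sum runs over the subgroup $H_e=\ker\bigl((\Z/n\Z)^{\times}\to(\Z/e\Z)^{\times}\bigr)$, so orthogonality gives $A(e)=\varphi(n)/\varphi(e)$ if $\chi$ is trivial on $H_e$ and $0$ otherwise, and $\chi$ is trivial on $H_e$ precisely when it is induced by a character mod $e$, i.e.\ when $d\mid e$. This is cleaner and entirely adequate for Theorem \ref{Th_LiHuKim}; the one ingredient you should cite or prove is the standard characterization of the conductor, namely that the induced moduli of $\chi$ are exactly the divisors of $n$ that are multiples of $d$. The trade-off is that the subgroup/orthogonality argument, as stated, does not immediately yield the twisted value $\chi^*(s)$ needed for arbitrary shifts, which is what the paper's heavier Lemmas \ref{Lemma_cong}--\ref{Lemma_charact} buy in the generalization of Theorem \ref{Th_gen_TL}.
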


Note that in \eqref{Menon_id_char} and \eqref{Menon_general_1_k} the sums are, in fact, over $1\le a\le n$ with $(a,n)=1$, since
$\chi(a)=0$ for $(a,n)>1$.  In the case $w_1=\cdots =w_k=0$, identity \eqref{Menon_general_1_k} was deduced by the same authors in paper
\cite{LiHuKimArXiv}. For the proof, Li, Hu and Kim computed first the given sum in the case $n=p^t$, a prime power, and then they showed
that the sum is multiplicative in $n$.

It is the goal of this paper to present a simple proof of Theorem \ref{Th_LiHuKim}. Our approach is similar to that given in \cite{Tot2018}, and
leads to a direct evaluation of the corresponding sum for every $n\in \N$. We obtain, in fact, the following generalization
of the  above result. Let $\mu$ denote the M\"obius function and let $*$ be the convolution
of arithmetic functions.

\begin{theorem} \label{Th_gen_TL} Let $F$ be an arbitrary arithmetic function, let $s_j\in \Z$, $\chi_j$ be Dirichlet characters \textup{(mod $n$)}
with conductors $d_j$ \textup{($1\le j\le m$)} and $\lambda_{\ell}$ be additive characters as defined above, with $w_{\ell} \in \Z$
\textup{($1\le \ell \le k$)}. Then
\begin{gather} \nonumber
\sum_{a_1,\ldots,a_m, b_1,\ldots,b_k=1}^n F((a_1-s_1,\ldots, a_m-s_m, b_1,\ldots,b_k,n)) \chi_1(a_1)\cdots \chi_m(a_m)
\lambda_1(b_1)\cdots \lambda_k(b_k) \\
= \varphi(n)^m \chi_1^*(s_1)\cdots \chi_m^*(s_m) \sum_{\substack{e\mid (n/d_1,\ldots,n/d_m,w_1,\ldots,w_k)\\ (n/e,s_1\cdots s_m)=1}}
\frac{e^k(\mu*F)(n/e)}{\varphi(n/e)^m}, \label{Menon_general_m_k}
\end{gather}
where $\chi_j^*$ are the primitive characters \textup{(mod $d_j$)} that induce $\chi_j$ \textup{($1\le j\le m$)}.
\end{theorem}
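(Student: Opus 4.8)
The plan is to linearise the arithmetic function $F$ and reduce everything to two elementary character sums. Set $g:=\mu*F$, so that $F=\1*g$ and hence $F(N)=\sum_{\delta\mid N}g(\delta)$ for every $N\in\N$. Applying this with $N=(a_1-s_1,\ldots,a_m-s_m,b_1,\ldots,b_k,n)$ and noting that a divisor $d$ divides this gcd exactly when $d\mid n$, $d\mid a_j-s_j$ for all $j$, and $d\mid b_\ell$ for all $\ell$, I would interchange the order of summation to rewrite the left-hand side of \eqref{Menon_general_m_k} as
\[
\sum_{d\mid n}g(d)\prod_{j=1}^{m}\Bigl(\,\sum_{\substack{a=1\\ a\equiv s_j\pmod d}}^{n}\chi_j(a)\Bigr)\prod_{\ell=1}^{k}\Bigl(\,\sum_{\substack{b=1\\ d\mid b}}^{n}\lambda_\ell(b)\Bigr),
\]
the inner sums factoring because the constraints on the separate variables are independent. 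It then remains to evaluate the two kinds of inner sum.

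The additive factors are immediate. Writing $b=dt$ with $1\le t\le n/d$, the $\ell$-th factor equals $\sum_{t=1}^{n/d}\exp\bigl(2\pi i w_\ell t/(n/d)\bigr)$, a full geometric sum that is $n/d$ if $(n/d)\mid w_\ell$ and $0$ otherwise. Hence the product over $\ell$ equals $(n/d)^k$ when $(n/d)\mid(w_1,\ldots,w_k)$ and vanishes otherwise.

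The Dirichlet factors are the heart of the matter, and I expect the conductor bookkeeping here to be the main obstacle. I would establish that, for a character $\chi$ (mod $n$) of conductor $d_\chi$ induced by the primitive character $\chi^*$,
\[
\sum_{\substack{a=1\\ a\equiv s\pmod d}}^{n}\chi(a)=\frac{\varphi(n)}{\varphi(d)}\,\chi^*(s)\qquad\text{if }d_\chi\mid d\text{ and }(s,d)=1,
\]
and $0$ otherwise. If $(s,d)>1$ then every admissible $a$ has $(a,n)>1$, so $\chi(a)=0$ and the sum is $0$. If $(s,d)=1$, only terms with $(a,n)=1$ contribute; I would detect the congruence through the orthogonality relation $\frac{1}{\varphi(d)}\sum_{\psi\pmod d}\overline{\psi(s)}\,\psi(a)$ over the multiplicative characters $\psi$ mod $d$, lift each $\psi$ to a character mod $n$, and apply orthogonality mod $n$. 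The only $\psi$ whose lift equals $\overline\chi$ then survives and contributes $\varphi(n)$; such a $\psi$ exists precisely when $d_\chi\mid d$, and it yields $\overline{\psi(s)}=\chi^*(s)$, giving the stated value.

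Finally I would assemble the factors. Only those $d\mid n$ satisfying $d_j\mid d$ and $(s_j,d)=1$ for every $j$, together with $(n/d)\mid w_\ell$ for every $\ell$, contribute; for such $d$ the product of the Dirichlet sums is $\bigl(\varphi(n)/\varphi(d)\bigr)^{m}\prod_{j=1}^{m}\chi_j^*(s_j)$ and the product of the additive sums is $(n/d)^k$. Substituting $e=n/d$ turns the divisibility conditions into $e\mid(n/d_1,\ldots,n/d_m,w_1,\ldots,w_k)$ and $(n/e,s_1\cdots s_m)=1$, while $g(d)=(\mu*F)(n/e)$, $\varphi(d)=\varphi(n/e)$ and $(n/d)^k=e^k$; this is exactly the right-hand side of \eqref{Menon_general_m_k}, completing the proof.
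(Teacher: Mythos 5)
Your argument is correct, and its overall architecture coincides with the paper's: write $F=\1*(\mu*F)$, interchange summation so that the multiple sum factors over the variables $a_j$ and $b_\ell$, evaluate the additive factors as full geometric sums, evaluate each Dirichlet factor by the formula you state, and finish with the substitution $e=n/d$; your bookkeeping in the final assembly (the conditions $e\mid(n/d_1,\ldots,n/d_m,w_1,\ldots,w_k)$ and $(n/e,s_1\cdots s_m)=1$) matches the right-hand side of \eqref{Menon_general_m_k} exactly. The one place where you genuinely diverge is in how the Dirichlet factor evaluation --- the paper's Lemma \ref{Lemma_charact} --- is established. The paper derives it from a congruence-counting result (Lemma \ref{Lemma_cong}, a two-modulus count of reduced residues proved via the M\"obius function) combined with the vanishing of an incomplete sum of a primitive character over a residue class modulo a proper divisor (Lemma \ref{Lemma_char_sum}). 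You instead detect the congruence $a\equiv s$ (mod $e$) by orthogonality of the characters mod $e$, lift each to a character mod $n$, and apply orthogonality mod $n$; the unique surviving character exists precisely when the conductor of $\chi$ divides $e$, and it contributes $\frac{\varphi(n)}{\varphi(e)}\chi^*(s)$, with the degenerate cases $(s,e)>1$ and $d\nmid e$ handled correctly. Your route is shorter and rests only on standard orthogonality relations; the paper's route stays at a more elementary level and produces Lemma \ref{Lemma_cong}, a refinement of a classical counting formula, as a by-product of independent interest. Both constitute complete proofs of the theorem.
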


We remark that the sum in the left hand side of identity \eqref{Menon_general_m_k} vanishes provided that there is an $s_j$ such
that $(s_j,d_j)>1$. If $F(n)=n$ ($n\in \N$), $m=1$ and $s_1=1$, then identity \eqref{Menon_general_m_k} reduces to \eqref{Menon_general_1_k}.
We also remark that the special case $F(n)=n$ ($n\in \N$), $m\ge 1$, $s_1=\cdots =s_m=1$, $k\ge 1$, $w_1=\cdots =w_k=0$ was considered in the
quite recent preprint \cite{ChenHuLiArXiv}. Several other special cases of formula \eqref{Menon_general_m_k} can be discussed.

See the papers \cite{LiHuKimArXiv,LiHuKimTaiw,Tot2011,Tot2018,ZhaCao} and the references therein for other generalizations and analogues
of Menon's identity.
\\
\section{Proof}

We need the following lemmas.

\begin{lemma} \label{Lemma_cong} Let $n,d,e\in \N$, $d\mid n$, $e\mid n$ and let $r,s\in \Z$. Then
\begin{equation*}
\sum_{\substack{a=1\\ (a,n)=1\\ a\equiv r \, \text{\rm (mod $d$)} \\ a\equiv s \, \text{\rm (mod $e$)} }}^n 1 =
\begin{cases} \displaystyle \frac{\varphi(n)}{\varphi(de)}(d,e),
& \text{ if $(r,d)=(s,e)=1$ and $(d,e) \mid r-s$}, \\ 0, & \text{ otherwise}.
\end{cases}
\end{equation*}
\end{lemma}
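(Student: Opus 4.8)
The plan is to reduce the double-congruence count to the number of reduced residues modulo $n$ lying in a single residue class, and then to invoke the natural reduction homomorphism between unit groups.

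First I would dispose of the vanishing cases. Since $d\mid n$ and $e\mid n$, any $a$ counted by the sum satisfies $(a,d)=(r,d)$ and $(a,e)=(s,e)$; as $(a,d)\mid (a,n)$ and $(a,e)\mid (a,n)$, if either $(r,d)>1$ or $(s,e)>1$ then $(a,n)>1$ for every admissible $a$, so the sum is empty. Moreover, a simultaneous solution of $a\equiv r \pmod d$ and $a\equiv s\pmod e$ exists if and only if $(d,e)\mid r-s$; when this fails the sum is again empty. This accounts for the ``otherwise'' branch.

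Next, assuming $(r,d)=(s,e)=1$ and $(d,e)\mid r-s$, I would combine the two congruences. By the Chinese Remainder Theorem for the (not necessarily coprime) moduli $d$ and $e$, there is a unique class $c$ modulo $L:=\lcm(d,e)$ with $c\equiv r\pmod d$ and $c\equiv s\pmod e$; since $c$ is coprime to both $d$ and $e$, we get $(c,L)=1$. Because $d\mid n$ and $e\mid n$ force $L\mid n$, and $\{1,\dots,n\}$ is a complete residue system modulo $L$, the sum equals the number of $a\in\{1,\dots,n\}$ with $(a,n)=1$ and $a\equiv c\pmod L$.

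The heart of the argument is then the equidistribution of reduced residues: the reduction map $(\Z/n\Z)^{\times}\to(\Z/L\Z)^{\times}$ is a surjective group homomorphism (surjectivity because a lift $c+Lt$ is automatically coprime to the prime divisors of $L$, while the finitely many prime divisors of $n$ not dividing $L$ can be avoided by a suitable choice of $t$), so all of its fibres have the common size $\varphi(n)/\varphi(L)$. In particular the fibre over $c$ contains $\varphi(n)/\varphi(L)$ elements, which is exactly the desired count. I expect the only remaining obstacle to be cosmetic, namely rewriting $\varphi(n)/\varphi(L)$ in the stated form $\varphi(n)(d,e)/\varphi(de)$. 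This follows from the multiplicative identity $\varphi(de)=(d,e)\,\varphi(\lcm(d,e))$ (equivalently $\varphi(\lcm(d,e))\,\varphi((d,e))=\varphi(d)\varphi(e)$), which is verified prime-by-prime from $v_p(\lcm(d,e))=\max(v_p(d),v_p(e))$ and $v_p((d,e))=\min(v_p(d),v_p(e))$ together with the multiplicativity of $\varphi$.
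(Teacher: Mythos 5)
Your proof is correct, but it takes a genuinely different route from the paper's. You merge the two congruences via the generalized Chinese Remainder Theorem into a single class $c$ modulo $L=[d,e]$ with $(c,L)=1$, and then count the reduced residues mod $n$ in that class as a fibre of the surjective reduction homomorphism $(\Z/n\Z)^{\times}\to(\Z/L\Z)^{\times}$, all of whose fibres have the common size $\varphi(n)/\varphi(L)$; the stated closed form then follows from $\varphi(de)=(d,e)\,\varphi([d,e])$. The paper never merges the congruences: it detects the coprimality condition $(a,n)=1$ by the M\"obius sum $\sum_{\delta\mid (a,n)}\mu(\delta)$, interchanges summations, counts the solutions $j\le n/\delta$ of the pair $\delta j\equiv r \pmod{d}$, $\delta j\equiv s \pmod{e}$ (namely $n/(\delta[d,e])$ when $(\delta,de)=1$ and $0$ otherwise), and evaluates the resulting sum $\sum_{\delta\mid n,\ (\delta,de)=1}\mu(\delta)/\delta$ as an Euler product. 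Your argument is more structural and shorter once the surjectivity of the reduction map is granted (you do justify it, and that is where the equidistribution really lives); the paper's is entirely elementary and is deliberately cast in the M\"obius--convolution language used in the rest of the article. Both treatments dispose of the degenerate cases in the same way, and each ends with an equivalent arithmetic identity: $de/[d,e]=(d,e)$ in the paper versus $\varphi(de)=(d,e)\,\varphi([d,e])$ in yours.
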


In the special case $e=1$ this is known in the literature, usually proved  by the
inclusion-exclusion principle. See, e.g., \cite[Th.\ 5.32]{Apo1976}. Here we use a different approach,
in the spirit of our paper.

\begin{proof}[Proof of Lemma {\rm \ref{Lemma_cong}}]
For each term of the sum, since $(a,n)=1$, we have $(r,d)=(a,d)=1$ and $(s,e)=(a,e)=1$. Also, the given congruences
imply $(d,e) \mid r-s$. We assume that these conditions are satisfied (otherwise the sum is empty and equals zero).

Using the property of the M\"{o}bius function, the given sum, say $S$, can be written as
\begin{equation} \label{last_sum}
S= \sum_{\substack{a=1\\ a\equiv r \, \text{\rm (mod $d$)} \\ a\equiv s \, \text{\rm (mod $e$)} }}^n  \sum_{\delta \mid (a,n)}
\mu(\delta) = \sum_{\delta \mid n} \mu(\delta) \sum_{\substack{j=1\\ \delta j\equiv r \, \text{\rm (mod $d$)} \\
\delta j\equiv s \, \text{\rm (mod $e$)} }}^{n/\delta} 1.
\end{equation}

Let $\delta\mid n$ be fixed. The linear congruence $\delta j\equiv r$ (mod $d$) has solutions in $j$ if and only if $(\delta,d)\mid r$, equivalent to
$(\delta,d)=1$, since $(r,d)=1$. Similarly, the congruence $\delta j\equiv s$ (mod $e$) has solutions in $j$ if and only if $(\delta,e)\mid s$,
equivalent to $(\delta,e)=1$, since $(s,e)=1$. These two congruences have common solutions in $j$ due to the condition $(d,e)\mid r-s$. Furthermore, if
$j_1$ and $j_2$ are solutions of these simultaneous congruences, then $\delta j_1\equiv \delta j_2$ (mod $d$) and $\delta j_1\equiv \delta j_2$
(mod $e$). Since $(\delta,d)=1$, this gives $j_1\equiv j_2$  (mod $[d,e]$). We deduce that there are
\begin{equation*}
N= \frac{n}{\delta[d,e]}
\end{equation*}
solutions (mod $n/\delta$) and the last sum in \eqref{last_sum} is $N$. This gives
\begin{equation*}
S= \frac{n}{[d,e]} \sum_{\substack{\delta \mid n\\ (\delta,de)=1}} \frac{\mu(\delta)}{\delta}= \frac{n}{[d,e]} \cdot
\frac{\varphi(n)/n}{\varphi(de)/(de)}= \frac{\varphi(n)}{\varphi(de)}(d,e).
\end{equation*}
\end{proof}

The next lemma is a known result. See, e.g., \cite{Tot2018} for its (short) proof.

\begin{lemma} \label{Lemma_char_sum}
Let $n\in \N$ and $\chi$ be a primitive character \textup{(mod $n$)}. Then for any $e\mid n$, $e<n$ and any $s\in \Z$,
\begin{equation*}
\sum_{\substack{a=1\\ a\equiv s \, \text{\rm (mod $e$)} }}^n \chi(a)=0.
\end{equation*}
\end{lemma}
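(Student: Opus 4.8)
The plan is to prove the identity by a multiplicative substitution that isolates the role of primitivity. Denote the sum by $S$. Since $\chi(a)=0$ whenever $(a,n)>1$, the only contributing terms come from $a$ coprime to $n$, so I may regard $S$ as a sum over the relevant elements of $(\Z/n\Z)^{\times}$ lying in the class $a\equiv s \pmod{e}$; in particular, no hypothesis on $s$ is needed.

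The key ingredient is the standard characterization of primitivity: a character $\chi$ (mod $n$) is induced by a character modulo a divisor $e$ of $n$ if and only if $\chi(c)=1$ for every $c$ with $(c,n)=1$ and $c\equiv 1 \pmod{e}$. Since $\chi$ is primitive, its conductor equals $n$, so for the given proper divisor $e<n$ the character $\chi$ is not induced modulo $e$. Hence there exists an integer $c$ with $(c,n)=1$, $c\equiv 1 \pmod{e}$ and $\chi(c)\neq 1$. Producing this $c$ is the heart of the matter and the sole point at which primitivity is used; everything else is formal.

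With $c$ fixed, I would exploit the complete multiplicativity of $\chi$: multiplying $S$ by $\chi(c)$ turns each summand $\chi(a)$ into $\chi(ca)$. The map $a\mapsto ca \pmod{n}$ is a bijection of $\Z/n\Z$ because $(c,n)=1$, and since $c\equiv 1 \pmod{e}$ it fixes setwise the residues satisfying $a\equiv s \pmod{e}$; hence as $a$ runs over that class modulo $n$, so does $ca$, and $\chi$ being periodic modulo $n$ the reindexing returns the original sum. This gives $\chi(c)S=S$, that is, $(\chi(c)-1)S=0$. As $\chi(c)\neq 1$, I conclude $S=0$, as claimed.
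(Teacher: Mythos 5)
Your proof is correct: the existence of $c$ with $(c,n)=1$, $c\equiv 1 \pmod{e}$ and $\chi(c)\neq 1$ is exactly the standard characterization of induced moduli applied to the proper divisor $e$ of the conductor $n$, and the reindexing $a\mapsto ca \pmod{n}$ genuinely permutes the class $a\equiv s \pmod{e}$ (since $e\mid n$ and $c\equiv 1 \pmod{e}$), yielding $(\chi(c)-1)S=0$ and hence $S=0$, with no hypothesis on $s$ needed. The paper itself gives no proof of this lemma, deferring to \cite{Tot2018} for its ``short proof,'' and your argument is precisely that classical one, so you have reproduced the intended approach.
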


Now we prove

\begin{lemma} \label{Lemma_charact} Let $\chi$ be a Dirichlet character \textup{(mod $n$)} with conductor $d$ \textup{($n\in \N$, $d\mid n$)}
and let $e\mid n$, $s\in \Z$. Then
\begin{equation*}
\sum_{\substack{a=1\\ a\equiv s \, \text{\rm (mod $e$)} }}^n \chi(a) =
\begin{cases} \displaystyle \frac{\varphi(n)}{\varphi(e)}\chi^*(s),
& \text{ if $d\mid e$ and $(s,e)=1$}, \\ 0, & \text{ otherwise},
\end{cases}
\end{equation*}
where $\chi^*$ is the primitive character \textup{(mod $d$)} that induces $\chi$.
\end{lemma}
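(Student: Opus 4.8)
The plan is to reduce the sum to a counting problem already settled by Lemma \ref{Lemma_cong}, and then to extract the dichotomy in the statement from the primitivity of $\chi^*$ via Lemma \ref{Lemma_char_sum}. Write $S$ for the sum in question. Since $\chi$ has conductor $d$, it is induced by the primitive character $\chi^*$ (mod $d$); this means $\chi(a)=\chi^*(a)$ when $(a,n)=1$ and $\chi(a)=0$ otherwise. Hence only the units modulo $n$ contribute, and on those the value $\chi(a)=\chi^*(a)$ depends solely on the residue of $a$ modulo $d$.

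First I would split $S$ according to the residue class $r$ of $a$ modulo $d$, pulling $\chi^*(r)$ out of the inner sum:
$$
S=\sum_{r=1}^{d}\chi^*(r)\sum_{\substack{a=1\\ (a,n)=1\\ a\equiv r\ (\mathrm{mod}\ d)\\ a\equiv s\ (\mathrm{mod}\ e)}}^n 1.
$$
The inner sum is exactly the quantity computed in Lemma \ref{Lemma_cong}, with the two moduli $d$ and $e$. Applying that lemma, the count vanishes for every $r$ unless $(s,e)=1$ (in which case $S=0$ at once), while if $(s,e)=1$ it equals $\frac{\varphi(n)}{\varphi(de)}(d,e)$ for those $r$ with $(r,d)=1$ and $(d,e)\mid r-s$, and $0$ otherwise. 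Because $\chi^*(r)=0$ whenever $(r,d)>1$, the coprimality condition on $r$ is automatic, and therefore
$$
S=\frac{\varphi(n)(d,e)}{\varphi(de)}\sum_{\substack{r=1\\ (d,e)\mid r-s}}^{d}\chi^*(r),\qquad (s,e)=1.
$$

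It remains to evaluate this character sum, and here the two branches emerge according to whether $d\mid e$. If $d\mid e$, then $(d,e)=d$, so the condition $r\equiv s\pmod{(d,e)}$ with $1\le r\le d$ pins $r$ down to the single residue $r\equiv s\pmod d$; the sum collapses to $\chi^*(s)$, and using $\varphi(de)=d\,\varphi(e)$ the prefactor simplifies to $\varphi(n)/\varphi(e)$, giving the first branch. If instead $d\nmid e$, then $g:=(d,e)$ is a proper divisor of $d$ with $g<d$, and the inner sum becomes a sum of the primitive character $\chi^*$ (mod $d$) over a full congruence class modulo $g$, which vanishes by Lemma \ref{Lemma_char_sum}; thus $S=0$.

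The step I expect to demand the most care is the bookkeeping around the two coprimality conditions $(a,n)=1$ and $(a,d)=1$: only the former is genuinely enforced by Lemma \ref{Lemma_cong}, while the latter is supplied for free by the vanishing of $\chi^*$ on non-units, and one must check these dovetail correctly. A secondary routine point is verifying that $\frac{\varphi(n)(d,e)}{\varphi(de)}$ reduces to $\varphi(n)/\varphi(e)$ when $d\mid e$, which follows from the multiplicativity relation $\varphi(de)=\varphi(d)\varphi(e)\,(d,e)/\varphi((d,e))$. Once these are in place, both cases fall out immediately from Lemmas \ref{Lemma_cong} and \ref{Lemma_char_sum}.
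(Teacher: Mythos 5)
Your proposal is correct and follows essentially the same route as the paper: split the sum over residues $r$ modulo $d$, evaluate the inner count with Lemma \ref{Lemma_cong}, and then invoke Lemma \ref{Lemma_char_sum} to show the resulting character sum over the class $r\equiv s \pmod{(d,e)}$ vanishes unless $(d,e)=d$, where it collapses to $\chi^*(s)$. The final simplification $\varphi(de)=d\,\varphi(e)$ for $d\mid e$ is also exactly the step the paper uses.
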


\begin{proof}[Proof of Lemma {\rm \ref{Lemma_charact}}]
We can assume $(a,n)=1$ in the sum. If $a\equiv s$ (mod $e$), then $(s,e)=(a,e)=1$. Given the Dirichlet character $\chi$ (mod $n$),
the primitive character $\chi^*$ (mod $d$) that induces $\chi$ is defined by
\begin{equation*}
\chi(a) =  \begin{cases} \chi^*(a), & \text{ if $(a,n)=1$}, \\ 0, & \text{ if $(a,n)>1$}.
\end{cases}
\end{equation*}

We deduce
\begin{equation*}
T:= \sum_{\substack{a=1\\ a\equiv s \, \text{\rm (mod $e$)} }}^n \chi(a) = \sum_{\substack{a=1\\ (a,n)=1\\ a\equiv s \,
\text{\rm (mod $e$)} }}^n \chi^*(a)
= \sum_{r=1}^d  \chi^*(r) \sum_{\substack{a=1\\ (a,n)=1\\ a\equiv r \, \text{\rm (mod $d$)}\\ a\equiv s \, \text{\rm (mod $e$)} }}^n 1,
\end{equation*}
where the inner sum is evaluated in Lemma \ref{Lemma_cong}. Since $(s,e)=1$, as mentioned above, we have
\begin{equation*}
T=  \sum_{\substack{r=1\\ (r,d)=1\\ (d,e)\mid r-s}}^d  \chi^*(r) \frac{\varphi(n)}{\varphi(de)}(d,e)=
\frac{\varphi(n)}{\varphi(de)}(d,e) \sum_{\substack{r=1\\ (r,d)=1\\ r\equiv s \, \text{\rm (mod $(d,e)$)}}}^d  \chi^*(r)
= \frac{\varphi(n)}{\varphi(de)}(d,e) \chi^*(s),
\end{equation*}
by Lemma \ref{Lemma_char_sum} in the case $(d,e)=d$, that is $d\mid e$. We conclude that
\begin{equation*}
T=\frac{\varphi(n)}{\varphi(de)}d \chi^*(s)= \frac{\varphi(n)}{\varphi(e)} \chi^*(s).
\end{equation*}

If $d\nmid e$, then $T=0$.
\end{proof}

\begin{proof}[Proof of Theorem {\rm \ref{Th_gen_TL}}]
Let $V$ denote the given sum. By using the identity $F(n)=\sum_{e\mid n} (\mu*F)(e)$, we have
\begin{gather*}
V= \sum_{a_1,\ldots,a_m, b_1,\ldots,b_k=1}^n \chi_1(a_1)\cdots \chi_m(a_m) \lambda_1(b_1)\cdots \lambda_k(b_k)
\sum_{e\mid (a_1-s_1,\ldots, a_m-s_m, b_1,\ldots,b_k,n)} (\mu*F)(e)
\end{gather*}
\begin{gather*}
= \sum_{e\mid n} (\mu*F)(e) \sum_{\substack{a_1=1\\ a_1\equiv s_1 \, \text{\rm (mod $e$)} }}^n \chi_1(a_1)
\cdots \sum_{\substack{a_m=1\\ a_m\equiv s_m \, \text{\rm (mod $e$)} }}^n \chi_m(a_m)
\sum_{\substack{b_1=1\\ e\mid b_1}}^n \lambda_1(b_1)\cdots \sum_{\substack{b_k=1\\ e\mid b_k}}^n \lambda_k(b_k)
\end{gather*}

Here for every $1\le \ell \le k$,
\begin{gather*}
\sum_{\substack{b_{\ell}=1\\ e\mid b_{\ell}}}^n \lambda_{\ell}(b_{\ell})= \sum_{c_{\ell}=1}^{n/e} \exp(2\pi i w_{\ell}c_{\ell}/(n/e))=
\begin{cases} \frac{n}{e}, & \text{ if $\frac{n}{e}\mid w_{\ell}$}, \\ 0, & \text{ otherwise},
\end{cases}
\end{gather*}
and using Lemma \ref{Lemma_charact} we deduce that
\begin{gather*}
V= \chi_1^*(s_1)\cdots \chi_m^*(s_m) \sideset{}{'}\sum  (\mu*F)(e) \left(\frac{\varphi(n)}{\varphi(e)}\right)^m \left(\frac{n}{e}\right)^k,
\end{gather*}
where the sum $\sum^{'}$ is over $e\mid n$ such that $d_j\mid e$, $(e,s_j)=1$ for all $1\le j\le m$ and
$n/e\mid w_{\ell}$ for all $1\le \ell \le k$.
Interchanging $e$ and $n/e$, the sum is over $e$ such that $e\mid n/d_j$, $(n/e,s_j)=1$ for all $1\le j\le m$ and $e\mid w_{\ell}$
for all $1\le \ell \le k$. This completes the proof.
\end{proof}

\section{Acknowledgement} This work was supported by the European Union, co-financed by the European
Social Fund EFOP-3.6.1.-16-2016-00004.

\end{document}